\documentclass[12pt]{article}

\usepackage{amsmath,amsthm,amssymb,dsfont,enumerate,tikz,pgfplots,hyperref}
\usepackage{setspace}
\usepackage[margin=1.25in]{geometry}

\newtheorem{theorem}{Theorem}[section]
\newtheorem{proposition}[theorem]{Proposition}

\theoremstyle{definition}
\newtheorem{definition}[theorem]{Definition}

\newtheorem{remark}[theorem]{Remark}

\numberwithin{equation}{section}
\numberwithin{figure}{section}


\renewcommand{\subset}{\subseteq}
\renewcommand{\hat}{\widehat}

\renewcommand{\epsilon}{\varepsilon}

\def\supp{\text{supp}}
\def\vol{\text{vol}}

\def\<{\langle}
\def\>{\rangle}
\def\({\Big(}
\def\){\Big)}

\def\F{\mathcal{F}}
\def\G{\mathcal{G}}

\def\P{\mathcal{P}}
\def\Q{\mathcal{Q}}
\def\R{\mathbb{R}}
\def\r{\mathcal{R}}
\def\S{\mathcal{S}}
\def\s{\mathbb{S}}

\def\Z{\mathbb{Z}}


\title{Rotationally invariant time-frequency scattering transforms}

\author{Wojciech Czaja\thanks{Department of Mathematics, University of Maryland, College Park} \and Weilin Li\footnotemark[1]}

\date{}

\begin{document}

\maketitle

\begin{abstract}
In this paper we construct directionally sensitive functions that can be viewed as directional time-frequency representations. We call such a sequence a \emph{rotational uniform covering frame} and by studying rotations of the frame, we derive the \emph{rotational Fourier scattering transform} and the \emph{truncated rotational Fourier scattering transform}. We prove that both operators are rotationally invariant, are bounded above and below, are non-expansive, and contract small translations and additive diffeomorphisms. We also construct \emph{finite uniform covering frames}.
\end{abstract}

{\bf Keywords:} Scattering transform, uniform covering frames, time-frequency, directional representations, neural networks, feature extraction

{\bf 2010 Math subject classification:} 42C15, 47N99, 68T10

\section{Introduction}

The extraction of informative and meaningful features from abstract data is an important step in many signal processing and data analysis applications. For example, a natural image typically comprises of a collection of objects, each having various shapes and patterns. Decomposing an image by orientation, location, and oscillation can separate and isolate individual objects from the remaining ones. For classification tasks, features also need to be stable to small distortions of the input data and encode underlying symmetries and invariances. 

Motivated by classification problems, Mallat constructed a stable feature extractor called the \emph{windowed scattering transform}, by incorporating a cascade of wavelet transforms and complex modulus \cite{mallat2012group}. Together with collaborators, they showed that it is useful for representing many types of data \cite{bruna2013invariant, sifre2013rotation, anden2014deep, hirn2015quantum}. Inspired by this theory and the use of Gabor representations in image analysis \cite{hamamoto1998gabor, kong2003palmprint, arivazhagan2006texture}, we developed a complementary time-frequency theory and studied the properties of the \emph{Fourier scattering transform} \cite{czaja2016analysis}.

The main purpose of this paper is to further incorporate directional information into the Fourier scattering transform. For many classification tasks, the data's information content is invariant under rotations. To name a few, previous papers have used rotationally invariant feature extractors for face recognition \cite{rowley1998rotation} and texture classification \cite{ojala2002multiresolution}. We approach this problem by building upon our previous work. We introduced a class of frames called \emph{uniform covering frames} and combined them with a neural network structure. We review these definitions in Section \ref{section background}. 

Uniform covering frames are not necessarily directionally sensitive, so in Section \ref{section rotational}, we construct a subclass called \emph{rotational uniform covering frames}. These functions are generated by modulations and a nested sequence of finite rotation groups. They are well-localized in both space and frequency, oscillate at various frequencies and directions, and their Fourier transforms are supported in wedges whose diameters are uniformly bounded. 

In Section \ref{section Fourier}, we exploit the rotational structure of these frame elements to derive the \emph{rotational Fourier scattering transform} and the \emph{truncated rotational Fourier scattering transform}. To do this, we study the action of a finite rotation group on the frame and this group theoretic viewpoint gives us a natural way of combining the frame with a neural network structure. Our main result, Theorem \ref{thm1}, shows that both scattering transforms are invariant under these rotations and satisfy other desirable feature extraction properties. 

The remaining sections answer natural questions about our theory. In Section \ref{section finite}, we show how to convert semi-discrete uniform covering frames to \emph{finite uniform covering frames}. These can be used to compute features for machine learning and signal processing tasks. In Section \ref{section directional}, we discuss the connection between rotational uniform covering frames and other types of directional representations, namely, curvelets, ridgelets, shearlets, and Gabor ridge functions. We argue that rotational uniform covering frames are time-frequency analogues of curvelets. 

\section{Background}
\label{section background}

Let $|\cdot|$ be the Euclidean distance on $\R^d$. The Fourier transform of a Schwartz function $f\in\S(\R^d)$ is $\hat f(\xi) =\int_{\R^d} f(x)e^{-2\pi ix\cdot \xi} dx$ and this definition has a unique extension to $f\in L^2(\R^d)$. Let $\<\cdot,\cdot\>$ be the usual inner product on $L^2(\R^d)$ and let $\|\cdot\|_{L^2}$ be the norm. For $y\in\R^d$, let $T_y$ be the translation operator $T_yf(x)=f(x-y)$. For $\tau\in C^1(\R^d;\R^d)$, let $T_\tau$ be the additive diffeomorphism $T_\tau f(x)=f(x-\tau(x))$. The essential support of a Lebesgue measurable function $f$, denoted $\supp(f)$, is the complement of the largest open set where $f=0$ almost everywhere. We say a function $f\in L^2(\R^d)$ is $(R,\epsilon)$ band-limited if $\|\hat f\|_{L^2(Q_R(0))}\geq (1-\epsilon)\|f\|_{L^2}$, where $Q_R(0)$ is the closed cube of side length $2R$ centered at 0.

\begin{definition}
	\label{def1}
	Let $\P$ be a countably infinite index set. A \emph{uniform covering frame} is a sequence of functions,
	\[
	\F=\{f_0\}\cup\{f_p\colon p\in\P\},
	\]
	satisfying the following assumptions. 
	\begin{enumerate}[(a)]\itemsep+.5em
		\item
		\underline{Assumptions on $f_0$ and $f_p$}. Let $f_0\in L^1(\R^d)\cap L^2(\R^d)\cap C^1(\R^d)$ such that $\hat{f_0}$ is supported in a compact neighborhood of the origin and $|\hat{f_0}(0)|=1$. For each $p\in\P$, let $f_p\in L^1(\R^d)\cap L^2(\R^d)$ such that $\supp(\hat{f_p})$ is compact and connected. 
		\item
		\underline{Frame condition}. Assume that for all $\xi\in\R^d$,
		\begin{equation}
			\label{eq frame} 
			|\hat{f_0}(\xi)|^2+\sum_{p\in\P} |\hat{f_p}(\xi)|^2=1.
		\end{equation}
		\item
		\underline{Uniform covering property}. For any $R>0$, there exists an integer $N>0$ such that for each $p\in\P$, the set $\supp(\hat{f_p})$ can be covered by $N$ cubes of side length $2R$.
	\end{enumerate}
\end{definition}

Uniform covering frames are generalizations of semi-discrete Gabor frames with band-limited windows and additionally, no wavelet frame is a uniform covering frame and vice versa \cite{czaja2016analysis}. We combined them with a network structure using the following method. Let $\P^k$ be the product of $\P$ with itself $k$-times. We associate each multi-index $p\in\P^k$ with the bounded operator $U[p]\colon L^2(\R^d)\to L^2(\R^d)$, defined as 
\[
U[p]f
=\begin{cases}
\ |f*f_p| &\text{if } p\in\P, \\
\ U[p_k] U[p_{k-1}] \cdots U[p_1] f &\text{if } p=(p_1,p_2,\dots,p_k)\in\P^k.
\end{cases}
\]

\begin{definition}
Given a uniform covering frame $\F$, the \emph{Fourier scattering transform} is the operator $\S_\F\colon L^2(\R^d)\to L^2(\R^d;\ell^2(\Z))$, defined as
\[
\S_\F(f)
=\{f*f_0\}\cup \{U[p]f*f_0\colon p\in\P^k,\ k=1,2,\dots\}.
\]
\end{definition}

The reason we described this operator as ``Fourier" is because each $f*f_p$ contains information about $f$ at well-localized regions in the frequency domain. There is a natural way of truncating this operator to obtain a finite sized network. Recall the following fact \cite[Proposition 2.5]{czaja2016analysis}. There exists $C_1>0$ and subsets $\{\P[m]\subset\P\colon m\geq 1\}$ such that for all $M\geq 1$,
\begin{equation}
\label{eq frame2} 
|\hat{f_0}(\xi)|^2+\sum_{p\in\P[M]} |\hat{f_p}(\xi)|^2
=\begin{cases} 
\ 1 &\text{if } \xi\in \overline{Q_{C_1M}(0)}, \\
\ 0 &\text{if } \xi\not\in \overline{Q_{C_1(M+1)}(0)}.
\end{cases}
\end{equation}
This result shows that even though $\P$ is an abstract index set, the tiling of the frequency domain, see equation (\ref{eq frame}), is done in a natural way. The definition of $\P[M]$ is used to define the following.

\begin{definition}
Given a uniform covering frame $\F$ and integers $M,K\geq 1$, the {\it truncated Fourier scattering transform} is the vector-valued operator $\S_\F[M,K]\colon L^2(\R^d)\to L^2(\R^d;\ell^2(\Z))$ defined as
\[
\S_\F(f)
=\{f*f_0\}\cup \{U[p]f*f_0\colon p\in\P[M]^k,\ k=1,2,\dots,K\}.
\]
\end{definition}
Our main results in \cite[Theorems 3.6 and 4.3]{czaja2016analysis} showed that both of the Fourier scattering transforms, under suitable assumptions, are non-expansive, are bounded above and below, and contract sufficiently small translations and diffeomorphisms. We will use precise versions of these results in Section \ref{section Fourier}.

\section{Rotational uniform covering frames}
\label{section rotational}

We construct a family of functions $\r=\r(A,B)$ that implicitly depends on two fixed parameters: $A>0$ and an integer $B\geq 1$. The functions are partially generated by finite rotation groups. For each integer $m\geq 1$, let $m^*$ denote the unique integer of the form $2^k$ such that
\begin{equation}
	\label{eq3}
	m
	\leq m^*
	<2m. 
\end{equation}
Let $R_m$ be the $2\times 2$ counter-clockwise rotation matrix
\[
R_m
=\begin{pmatrix}
\cos (B_m) &-\sin (B_m) \\
\sin (B_m) &\ \ \ \cos (B_m)
\end{pmatrix}, 
\quad\text{where}\quad
B_m = \frac{2\pi}{m^*B}.
\]
Let $G_m=G_m(B)$ be the finite rotation group generated by the following set of $d\times d$ matrices
\[
\Bigg\{
\begin{pmatrix}
R_m \\
&1 \\
& & \ddots \\
& & & \ddots \\
& & & & 1
\end{pmatrix}, \
\begin{pmatrix}
1 \\
&R_m \\
& &1 \\
& & & \ddots \\
& & & & 1
\end{pmatrix}
,\ \cdots, \
\begin{pmatrix}
1 \\
&\ddots \\
& &\ddots \\
& & & 1 \\
& & & & R_m
\end{pmatrix}
\Bigg\}.
\]
The identity element of $G_m$ is denoted $e$. We will see that $G_1$ is the most important group in this construction, so to simplify our notation, we write $G=G_1$. If $H$ is a subgroup of $G$, then we write $H \leq G$. We have the nested subgroup property,
\begin{equation}
\label{eq5}
G=G_1
\leq G_2
\leq \cdots 
\leq G_m
\leq  \cdots.
\end{equation}
It follows that for any $n\geq m$, each $r\in G_m$ is a bijection on $G_n$.

We need to introduce spherical coordinates. Let $\phi_1,\phi_2,\dots,\phi_{d-1}$ be the angular coordinates where $\phi_{d-1}\in[0,2\pi)$ and $\phi_j\in [0,\pi]$ for $j=1,2,\dots,d-2$. If $(\xi_1,\xi_2,\dots,\xi_d)$ are the Euclidean coordinates of $\xi\in\R^d$, then its spherical coordinates are $(\rho,\phi_1,\dots,\phi_{d-1})$, where $\rho=|\xi|$,
\[
\xi_d = \rho \ \prod_{j=1}^{d-1} \sin(\phi_j),
\quad\text{and}\quad
\xi_k=\rho \cos(\phi_k)\ \prod_{j=1}^{k-1} \sin (\phi_j), 
\quad\text{for}\quad k=1,2,\dots,d-1.
\]
The following proposition contains two well-known results about the existence of certain cutoff functions whose modulus squared forms a partition of unity. 

\begin{proposition}[Hern\'{a}ndez and Weiss \cite{hernandez1996first}, Chapter 1.3]
	\label{prop cutoff}
	\indent
	\begin{enumerate}[(a)]\itemsep+0.5em
		\item 
		For any $A>0$, there exists a non-negative and even $\eta_A\in C^\infty(\R)$ supported in the closed interval $[-A,A]$ such that for all $x\in\R$,
		\[
		\sum_{m\in\Z} |\eta_A(x-Am)|^2=1. 
		\] 
		\item
		For any integers $m\geq 1$ and $B\geq 1$, there exists a non-negative $\beta_{m,B}\in C^\infty(\mathbb{S}^{d-1})$, supported in the sector $\{\phi_1\colon |\phi_1|\leq B_m\}$, such that for all $\omega\in\mathbb{S}^{d-1}$,
		\[
		\sum_{r\in G_m} |\beta_{m,B}(r\omega)|^2=1.
		\]
	\end{enumerate}
\end{proposition}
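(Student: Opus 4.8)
The plan is to handle the two parts separately, both by explicit construction. For part (a), first I would note that $x\mapsto\sum_{m\in\Z}\eta_A(x-Am)^2$ is $A$-periodic, so it suffices to verify the identity on $[0,A]$; since $\supp(\eta_A)\subset[-A,A]$ and the translates are spaced by $A$, at most the terms $m=0$ and $m=1$ contribute there, and using evenness the requirement collapses to
\[
\eta_A(y)^2+\eta_A(A-y)^2=1, \qquad y\in[0,A].
\]
I would then fix a smooth nondecreasing profile $\nu\in C^\infty(\R)$ with $\nu\equiv 0$ on $(-\infty,0]$, $\nu\equiv 1$ on $[1,\infty)$, and the symmetry $\nu(t)+\nu(1-t)=1$ (obtained by symmetrizing any smooth monotone transition), and define $\eta_A(y)=\cos\big(\tfrac{\pi}{2}\nu(y/A)\big)$ on $[0,A]$, extended evenly to $[-A,A]$ and by zero elsewhere. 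The identity is then immediate from $\cos^2+\sin^2=1$, since the symmetry of $\nu$ gives $\eta_A(A-y)=\cos\big(\tfrac{\pi}{2}\nu(1-y/A)\big)=\sin\big(\tfrac{\pi}{2}\nu(y/A)\big)$. The only point to check is smoothness: $\eta_A\equiv 1$ near $0$ (so the even extension is $C^\infty$) and $\eta_A\equiv 0$ near $\pm A$ (so it glues smoothly to the zero function), both because $\nu$ is locally constant at $0$ and $1$.

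For part (b) my plan is a group-averaging (normalization) argument valid for any finite rotation group. Suppose I have a \emph{seed} $\gamma\in C^\infty(\mathbb{S}^{d-1})$ that is non-negative, supported in the sector $\{\phi_1\colon|\phi_1|\leq B_m\}$, and satisfies $D(\omega):=\sum_{r\in G_m}\gamma(r\omega)^2>0$ for every $\omega$. Then I would set $\beta_{m,B}(\omega)=\gamma(\omega)\,D(\omega)^{-1/2}$. Because $G_m$ is finite, $D$ is a finite sum of smooth functions and hence smooth; being also strictly positive, $D^{1/2}$ is smooth and positive, so $\beta_{m,B}$ is smooth, non-negative, and supported exactly where $\gamma$ is, namely in the sector. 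The partition identity comes from the group structure: for fixed $s\in G_m$ the map $r\mapsto rs$ is a bijection of $G_m$, so $D(s\omega)=\sum_{r}\gamma(rs\omega)^2=\sum_{t}\gamma(t\omega)^2=D(\omega)$ is $G_m$-invariant, whence
\[
\sum_{s\in G_m}\beta_{m,B}(s\omega)^2=\sum_{s\in G_m}\frac{\gamma(s\omega)^2}{D(s\omega)}=\frac{1}{D(\omega)}\sum_{s\in G_m}\gamma(s\omega)^2=1.
\]

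The main obstacle is producing the seed $\gamma$ with the positivity (covering) property, and this is the only place the specific geometry of $G_m$ enters. Writing $S=\{\phi_1<B_m\}$ for the open geodesic cap of angular radius $B_m$ about $e_1$, positivity of $D$ is equivalent to $\bigcup_{r\in G_m}r^{-1}(S)=\mathbb{S}^{d-1}$, i.e. to the orbit $G_m\,e_1$ being a $B_m$-net of the sphere. This is exactly where the choice $B_m=2\pi/(m^*B)$ is used: each generator rotates $e_1$ through the angle $B_m$ inside a coordinate plane, so successive orbit points lie a geodesic distance $B_m$ apart and the caps of radius $B_m$ about them overlap, while the generators acting in the consecutive planes $e_1e_2,\dots,e_{d-1}e_d$ spread the orbit of $e_1$ in every coordinate direction so that the net covers $\mathbb{S}^{d-1}$. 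For $d=2$ this is transparent, since $G_m$ is cyclic of order $m^*B$, its orbit of $\phi_1=0$ is $\{kB_m\}$, and part (a) with $A=B_m$ already supplies the construction. Once the covering is established, I would take $\gamma$ to be any non-negative smooth function positive on $S$ and supported in its closure, arranging (as in part (a)) that $\gamma$ is locally constant in $\phi_1$ near $\phi_1=0$ so that it is smooth at the pole $e_1$; the averaging step above then completes the proof.
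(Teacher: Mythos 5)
Your part (a) is correct, and it is essentially the construction the paper is implicitly invoking: the paper offers no proof of this proposition, citing Hern\'andez--Weiss, Chapter 1.3, where the bell function is built exactly as your $\cos\big(\tfrac{\pi}{2}\nu(\cdot)\big)$ profile with the symmetry $\nu(t)+\nu(1-t)=1$. One small slip: the hypotheses you list for $\nu$ (smooth, $0$ on $(-\infty,0]$, $1$ on $[1,\infty)$, symmetric) do not imply that $\nu$ is \emph{locally constant} at $0$ and $1$, which is what you invoke for smoothness of the even extension at $0$ and of the extension by zero at $\pm A$. This is easily repaired: either place the transition inside $[1/4,3/4]$ before symmetrizing, or observe that flatness of $\nu$ at $0$ and $1$ (all derivatives vanish there, since $\nu$ is constant on half-lines) already forces all derivatives of the composition to vanish at $y=0$ and $y=A$, which suffices. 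Cosmetic.

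Part (b) has a genuine gap. The normalization $\beta_{m,B}=\gamma\,D^{-1/2}$ with $D$ being $G_m$-invariant is the standard averaging argument and is fine \emph{given} a seed whose cap translates cover the sphere; as you say yourself, the covering is the only substantive step, and your justification does not establish it for $d\geq 3$. The orbit points you implicitly use are the ordered products $g_{d-1}^{j_{d-1}}\cdots g_1^{j_1}e_1$, whose spherical angles are quantized to multiples of $B_m$; choosing each $j_i$ optimally leaves an error up to $B_m/2$ in each of the $d-1$ angular coordinates, and in the region where the intermediate sines are close to $1$ these errors compound to a geodesic distance of order $\tfrac{\sqrt{d-1}}{2}B_m$, which exceeds $B_m$ once $d\geq 5$. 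So ``successive orbit points lie a distance $B_m$ apart and the caps overlap'' proves covering along each one-parameter circle of rotations, not on $\mathbb{S}^{d-1}$; you would need to exploit the full orbit, not just these ordered words. Worse, the difficulty is structural rather than technical: for $d\geq 3$ and $m^*B\geq 5$, the rotations by $2\pi/(m^*B)$ in the planes $e_1e_2$ and $e_2e_3$ are rotations of order $m^*B$ about orthogonal axes, and by the classification of finite subgroups of $SO(3)$ no finite rotation group contains two such elements; hence the group they generate is infinite, and the finiteness of $G_m$ on which both your smoothness-of-$D$ argument and the sum $\sum_{r\in G_m}$ itself depend holds only in special cases. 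In other words, your proof is complete exactly in the case $d=2$ (where $G_m$ is cyclic of order $m^*B$ and part (a) with $A=B_m$ does the work), and in higher dimensions the covering-plus-finiteness step cannot be waved through as written.
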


We are ready to define the uniform covering frame elements in the Fourier domain and in spherical coordinates. For any $\xi\in\R^d$, we can write $\xi=\rho\omega$ where $\rho\geq 0$ and $\omega\in\s^{d-1}$. Let $f_0$ be the smooth function such that
\begin{equation}
	\label{eq10}
	\hat{f_0}(\xi)
	=\hat{f_0}(\rho\omega)
	=\eta_A(\rho).
\end{equation}
For each $m\geq 1$ and $r\in G_m$, let $f_{m,r}$ be the smooth function such that
\begin{equation}
	\label{eq11}
	\hat{f_{m,r}}(\xi)
	=\hat{f_{m,r}}(\rho\omega)
	=\eta_A(\rho-mA) \beta_{m,B}(r^{-1}\omega).
\end{equation}

The next result shows that the sequence of functions,
\[
\r
=\{f_0\}\cup\{f_{m,r}\colon m\geq 1,\ r\in G_m\},
\]
is a uniform covering frame, where the index set is
\[
\G
=\{(m,r)\colon m\geq 1,\ r\in G_m\}. 
\]

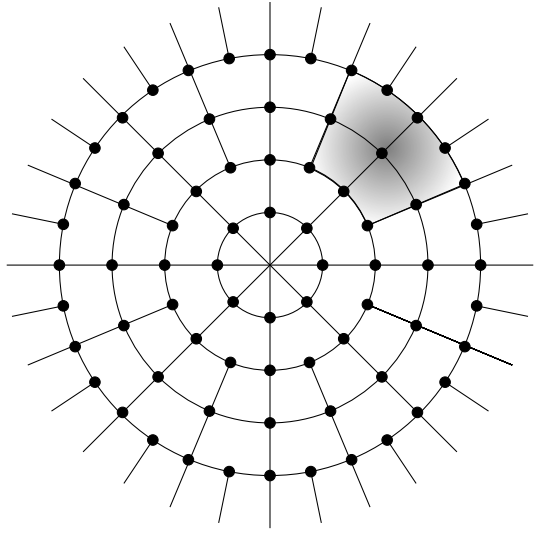
\begin{figure}[h]
	\begin{center}
		\begin{tikzpicture}[scale=.35]
		\filldraw[shading=radial,inner color=gray] 
		(3.7,1.5) arc [radius=4, start angle=22.5, delta angle=45]
		-- (3.05,7.4) arc [radius=8, start angle=67.5, delta angle=-45]
		-- cycle;
		\draw 
		(0,0) circle [radius=2] 
		(0,0) circle [radius=4]
		(0,0) circle [radius=6]
		(0,0) circle [radius=8]
		(-10,0)--(10,0)
		(0,10)--(0,-10);
		\draw
		(7.1,7.1)--(-7.1,-7.1)
		(-7.1,7.1)--(7.1,-7.1)
		(3.7,1.5)--(9.2,3.8)
		(1.5,3.7)--(3.8,9.2)
		(-3.7,1.5)--(-9.2,3.8)
		(-1.5,3.7)--(-3.8,9.2)
		(-3.7,-1.5)--(-9.2,-3.8)
		(1.5,-3.7)--(3.8,-9.2)
		(3.7,-1.5)--(9.2,-3.8)
		(-1.5,-3.7)--(-3.8,-9.2)
		(3.7,-1.5)--(9.2,-3.8)
		(7.85,1.55)--(9.8,1.95)
		(6.65,4.45)--(8.3,5.55)
		(1.55,7.85)--(1.95,9.8)
		(4.45,6.65)--(5.55,8.3)
		(3.7,-1.5)--(9.2,-3.8)
		(7.85,-1.55)--(9.8,-1.95)
		(6.65,-4.45)--(8.3,-5.55)
		(1.55,-7.85)--(1.95,-9.8)
		(4.45,-6.65)--(5.55,-8.3)
		(3.7,-1.5)--(9.2,-3.8)
		(-7.85,1.55)--(-9.8,1.95)
		(-6.65,4.45)--(-8.3,5.55)
		(-1.55,7.85)--(-1.95,9.8)
		(-4.45,6.65)--(-5.55,8.3)
		(3.7,-1.5)--(9.2,-3.8)
		(-7.85,-1.55)--(-9.8,-1.95)
		(-6.65,-4.45)--(-8.3,-5.55)
		(-1.55,-7.85)--(-1.95,-9.8)
		(-4.45,-6.65)--(-5.55,-8.3);
		\draw [fill]
		(2,0) circle [radius=.2]
		(-2,0) circle [radius=.2]
		(0,2) circle [radius=.2]
		(0,-2) circle [radius=.2]
		(1.4,1.4) circle [radius=.2]
		(-1.4,-1.4) circle [radius=.2]
		(-1.4,1.4) circle [radius=.2]
		(1.4,-1.4) circle [radius=.2]
		(4,0) circle [radius=.2]
		(-4,0) circle [radius=.2]
		(0,4) circle [radius=.2]
		(0,-4) circle [radius=.2]
		(2.8,2.8) circle [radius=.2]
		(-2.8,-2.8) circle [radius=.2]
		(-2.8,2.8) circle [radius=.2]
		(2.8,-2.8) circle [radius=.2]
		(8,0) circle [radius=.2]
		(-8,0) circle [radius=.2]
		(0,8) circle [radius=.2]
		(0,-8) circle [radius=.2]
		(5.6,5.6) circle [radius=.2]
		(-5.6,-5.6) circle [radius=.2]
		(-5.6,5.6) circle [radius=.2]
		(5.6,-5.6) circle [radius=.2]
		(3.7,1.5) circle [radius=.2]
		(5.55,2.3) circle [radius=.2]
		(7.4,3.1) circle [radius=.2]
		(-3.7,1.5) circle [radius=.2]
		(-5.55,2.3) circle [radius=.2]
		(-7.4,3.1) circle [radius=.2]
		(3.7,-1.5) circle [radius=.2]
		(5.55,-2.3) circle [radius=.2]
		(7.4,-3.1) circle [radius=.2]
		(-3.7,-1.5) circle [radius=.2]
		(-5.55,-2.3) circle [radius=.2]
		(-7.4,-3.1) circle [radius=.2]
		(1.5,3.7) circle [radius=.2]
		(2.3,5.55) circle [radius=.2]
		(3.1,7.4) circle [radius=.2]
		(-1.5,3.7) circle [radius=.2]
		(-2.3,5.55) circle [radius=.2]
		(-3.1,7.4) circle [radius=.2]
		(1.5,-3.7) circle [radius=.2]
		(2.3,-5.55) circle [radius=.2]
		(3.1,-7.4) circle [radius=.2]
		(-1.5,-3.7) circle [radius=.2]
		(-2.3,-5.55) circle [radius=.2]
		(-3.1,-7.4) circle [radius=.2]
		(6,0) circle [radius=.2]
		(-6,0) circle [radius=.2]
		(0,6) circle [radius=.2]
		(0,-6) circle [radius=.2]
		(4.25,4.25) circle [radius=.2]
		(-4.25,4.25) circle [radius=.2]
		(-4.25,-4.25) circle [radius=.2]
		(4.25,-4.25) circle [radius=.2]
		(-7.85,-1.55) circle [radius=.2]
		(-6.65,-4.45) circle [radius=.2]
		(-1.55,-7.85) circle [radius=.2]
		(-4.45,-6.65) circle [radius=.2]
		(7.85,-1.55) circle [radius=.2]
		(6.65,-4.45) circle [radius=.2]
		(1.55,-7.85) circle [radius=.2]
		(4.45,-6.65) circle [radius=.2]
		(-7.85,1.55) circle [radius=.2]
		(-6.65,4.45) circle [radius=.2]
		(-1.55,7.85) circle [radius=.2]
		(-4.45,6.65) circle [radius=.2]
		(7.85,1.55) circle [radius=.2]
		(6.65,4.45) circle [radius=.2]
		(1.55,7.85) circle [radius=.2]
		(4.45,6.65) circle [radius=.2];
	\end{tikzpicture}
	
	\caption{Let $d=2$ and $G$ be the group of rotations by angle $2\pi/8$. The black dots are elements of $\G$, when embedded in $\R^2$, for the first four uniform Fourier scales, and the shaded gray region is the support of $(f_{3,2\pi/8})^\wedge$.}
	\label{fig 2}
\end{center}
\end{figure}

\begin{theorem}
	\label{thm rot}
	Let $\r=\{f_0\}\cup\{f_{m,r}\colon (m,r)\in\G\}$ be the sequence of functions defined above. Then, $\r$ is a uniform covering frame for $L^2(\R^d)$.
\end{theorem}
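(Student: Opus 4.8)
The plan is to verify directly that $\r$ satisfies the three conditions (a), (b), (c) of Definition \ref{def1}. Conditions (a) and (b) follow cleanly from the two partition-of-unity statements of Proposition \ref{prop cutoff} together with elementary properties of the spherical coordinate map, whereas condition (c) carries the geometric content and will be the main point.

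For condition (a): since $\eta_A$ is even and smooth, the radial function $\hat{f_0}(\rho\omega)=\eta_A(\rho)$ extends to a $C^\infty$ function on $\R^d$ supported in the closed ball of radius $A$; being smooth and compactly supported it lies in $\S(\R^d)$, so $f_0\in\S\subset L^1\cap L^2\cap C^1$. Because $\eta_A\in C^\infty(\R)$ vanishes outside $[-A,A]$, all of its derivatives vanish at $\pm A$; hence at the origin only the $m=0$ term of $\sum_m \eta_A(-Am)^2=1$ survives, giving $|\hat{f_0}(0)|=\eta_A(0)=1$. For the remaining elements, $\hat{f_{m,r}}(\rho\omega)=\eta_A(\rho-mA)\,\beta_{m,B}(r^{-1}\omega)$ is bounded, supported in $\{(m-1)A\le\rho\le(m+1)A\}$ intersected with a rotated spherical cap, and—using again that $\eta_A$ vanishes to infinite order at $\pm A$, which handles the $m=1$ scale near the origin—extends to a function in $C^\infty_c(\R^d)\subset\S(\R^d)$; thus $f_{m,r}\in L^1\cap L^2$ and $\supp(\hat{f_{m,r}})$ is compact and connected.

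For condition (b): writing $\xi=\rho\omega$ and using that $r\mapsto r^{-1}$ is a bijection of the finite group $G_m$, Proposition \ref{prop cutoff}(b) gives, for each fixed $m$, $\sum_{r\in G_m}|\hat{f_{m,r}}(\xi)|^2=\eta_A(\rho-mA)^2\sum_{r\in G_m}\beta_{m,B}(r\omega)^2=\eta_A(\rho-mA)^2$. Summing over the scales and adding the $f_0$ term reduces the frame identity to $\sum_{m\ge 0}\eta_A(\rho-mA)^2=1$ for $\rho\ge 0$. Since $\eta_A$ is even and supported in $[-A,A]$, every term with $m<0$ vanishes for $\rho\ge 0$, so this sum coincides with the full partition of unity of Proposition \ref{prop cutoff}(a), which equals $1$.

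The crux is condition (c), which is exactly where the coupling between the radial scale and the angular width enters. For each $(m,r)$, the set $\supp(\hat{f_{m,r}})$ is the image under the rotation $r$ of an annular sector of radial extent $[(m-1)A,(m+1)A]$, of width $2A$, and angular aperture $B_m=2\pi/(m^*B)$ about the $\xi_1$-axis. I would bound its Euclidean diameter by controlling the radial and tangential extents separately: the radial extent is $2A$, independent of $m$, while the tangential extent of a cap of angular radius $B_m$ at radius at most $(m+1)A$ is of order $(m+1)A\,\sin B_m\lesssim (m+1)A\,B_m$. The key estimate is that $m\le m^*<2m$ forces $(m+1)B_m=2\pi(m+1)/(m^*B)\le 4\pi/B$, so the tangential extent is bounded by a constant multiple of $A/B$, uniformly in $m$ and—since rotations are isometries—in $r$. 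Hence every $\supp(\hat{f_{m,r}})$ has diameter at most some $D=D(A,B)$, and any set of diameter at most $D$ is covered by a number $N=N(R,A,B,d)$ of cubes of side $2R$ that is independent of $(m,r)$. This uniform bound on the wedge diameter—the manifestation of the parabolic-type scaling built into the nested groups $G_m$ via the choice of $m^*$—is the only non-routine step; everything else is bookkeeping with the two partitions of unity.
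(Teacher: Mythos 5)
Your proposal is correct and follows essentially the same route as the paper: direct verification of Definition \ref{def1}, with conditions (a) and (b) reduced to the two partitions of unity in Proposition \ref{prop cutoff}, and condition (c) reduced to showing the diameter of the wedge $\{|\rho-Am|\le A,\ |\phi_1|\le B_m\}$ is bounded uniformly in $m$ via the key inequality $m\le m^*<2m$. The only difference is cosmetic: where the paper first discards the finitely many scales with $m^*B<4$ and then estimates each Cartesian coordinate separately using $|1-\cos t|\le t^2/2$, you bound the diameter in one step through the radial/tangential splitting $|\rho\omega-\gamma\theta|\le|\rho-\gamma|+\gamma\,|\omega-\theta|\le 2A+2A(m+1)B_m\le 2A+8\pi A/B$, which is slightly cleaner and needs no case split.
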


\begin{proof}
	By construction, $\hat{f_0}$ and $\hat{f_p}$ are supported in a compact and connected sets. By Proposition \ref{prop cutoff}, we have $\hat{f_0}(0)=\eta_A(0)=1$. Since $f_0$ and $f_p$ are Schwartz functions, they belong to $L^1(\R^d)\cap L^2(\R^d)\cap C^1(\R^d)$. 
	
	We check that the frame condition holds. Using (\ref{eq10}), (\ref{eq11}), and Proposition \ref{prop cutoff}, we see that
	\begin{align*}
	&|\hat{f_0}(\xi)|^2+\sum_{m=1}^\infty\sum_{r\in G_m}|\hat{f_{m,r}}(\xi)|^2 \\
	&\quad=|\eta_A(\rho)|^2 + \sum_{m=1}^\infty |\eta_A(\rho-Am)|^2 \sum_{r\in G_m}|\beta_{m,B}(r^{-1}\omega)|^2
	=1,
	\end{align*}
	for all $\xi\in\R^d$. See Figure \ref{fig 2} for a visualization of $\G$ and the tiling properties of $\r$.
	
	It remains check that the uniform covering property holds. For each $m\geq 1$ and $r\in G_m$, since $\supp(\hat{f_{m,r}})$ is a rotation of $\supp(\hat{f_{m,e}})$, it suffices to check the uniform covering property for the family of sets $\{\supp(\hat{f_{m,e}})\colon m\geq 1\}$. Further, it suffices to check the uniform covering property for the subset $\{\supp(\hat{f_{m,e}})\colon m^*B\geq 4\}$, since the complement of this set is $\{\supp(\hat{f_{m,e}})\colon m^*B<4\}$, which has finite cardinality. 
	
	From here onwards, we assume $m^* B\geq 4$, or equivalently, $B_m\leq \pi/2$. Observe that $\hat{f_{m,e}}$ is supported in the wedge,
	\begin{equation}
		\label{eq W}
		W_m = \Big\{(\rho,\phi_1,\phi_2,\dots,\phi_{d-1})\colon |\rho-Am|\leq A,\ |\phi_1|\leq B_m\Big\}.
	\end{equation}
	To show that $\{W_m\colon m^*B\geq 4\}$ satisfies the uniform covering property, it suffices to show that the maximum distance between any two points in $W_m$ is bounded uniformly in $m$. 
	
	Let $\xi,\zeta\in W_m$ and let $(\rho,\phi_1,\phi_2,\dots,\phi_{d-1})$ and $(\gamma,\theta_1,\theta_2,\dots,\theta_{d-1})$ be their spherical coordinates, respectively. We have
	\[
	|\xi_1-\zeta_1|
	=|\rho\cos(\phi_1)-\gamma\cos(\theta_1)|. 
	\]
	Since $|\phi_1|\leq B_m\leq \pi/2$ and similarly for $\theta_1$, we see that
	\[
	|\xi_1-\zeta_1|
	\leq A(m+1)-A(m-1)\cos(B_m).
	\]
	See Figure \ref{fig 3} for an illustration of this inequality. Using the standard trigonometric inequality, $|1-\cos(t)|\leq t^2/2$ for all $t\in\R$, we see that
	\begin{equation}
		\label{eq4}
		|\xi_1-\zeta_1|
		\leq 2A+\frac{Am}{2}\(\frac{2\pi}{m^*B}\)^2
		= 2A+\frac{2\pi^2 A}{B^2}\frac{m}{(m^*)^2}. 
	\end{equation}
	For $k=2,3,\dots,d-1$, we have
	\[
	|\xi_k-\zeta_k|
	= \Big|\rho \cos(\phi_k) \prod_{j=1}^{k-1} \sin(\phi_j) - \gamma \cos(\theta_k)\prod_{j=1}^{k-1} \sin(\theta_j)\Big|
	\leq \rho |\sin(\phi_1)|+\gamma|\sin(\theta_1)|. 
	\]
	Using that $\rho\leq A(m+1)$, the trigonometric inequality $|\sin(\phi_1)|\leq |\phi_1|\leq B_m$, and similarly for $|\sin(\theta_1)|$, we have
	\begin{equation}
		\label{eq7}
		|\xi_k-\zeta_k|
		\leq 2 A(m+1)B_m
		\leq \frac{4\pi A}{B}\frac{m+1}{m^*}.
	\end{equation}
	The same argument shows that
	\begin{equation}
		\label{eq8}
		|\xi_d-\zeta_d|
		\leq \rho |\sin(\phi_1)|+\gamma|\sin(\theta_1)|
		\leq \frac{4\pi A}{B}\frac{m+1}{m^*}.
	\end{equation}
	The inequalities (\ref{eq3}), (\ref{eq4}), (\ref{eq7}), and (\ref{eq8}) imply the Euclidean distance between any two points in $W_m$ is bounded uniformly in $m$, which completes the proof. 
	
	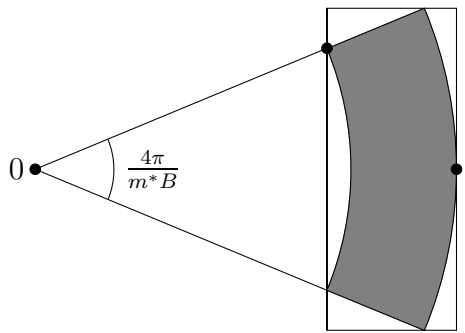
\begin{figure}[h]
		\begin{center}
			\begin{tikzpicture}[scale=.7]
			\draw [fill, color=gray]
			(5.54,-2.30) arc [radius=6, start angle=337.5, delta angle=45]
			-- (7.39,3.06) arc [radius=8, start angle=22.5, delta angle=-45]
			-- cycle;
			\draw 
			(5.54,-2.30) arc [radius=6, start angle=337.5, delta angle=45]
			-- (7.39,3.06) arc [radius=8, start angle=22.5, delta angle=-45]
			-- cycle;
			\draw [fill]
			(0,0) circle [radius=.1];
			\draw 
			(1.38,-0.58) arc [radius=1.5, start angle=337.5, delta angle=45];
			\draw 
			(5.54,-3.06) 
			-- (5.54,3.06)
			-- (8,3.06)
			-- (8,-3.06)
			-- (5.54,-3.06)
			-- (5.54,3.06);
			\draw 
			(0,0) -- (5.54,-2.30) 
			(0,0) -- (5.54,2.30); 
			\draw
			(0,0) node[left]{$0$};	
			\draw
			(1.5,0) node[right]{$\frac{4\pi}{m^*B}$};
			\draw [fill]
			(5.54,2.30) circle [radius=.1];
			\draw [fill]
			(8,0) circle [radius=.1];
			\end{tikzpicture}
		\end{center}
		
		\caption{An illustration of inequality (\ref{eq4}). The shaded region represents all possible values of $(\rho,\phi_1)$ and $(\gamma,\theta_1)$, where $A(m-1)\leq\rho\leq A(m+1)$ and $|\theta_1|\leq B_m$, and likewise for $\gamma$ and $\phi_1$. The maximum horizontal distance between any points in this wedge is the width of the rectangle, and this distance is attained by the points $(A(m+1),0)$ and $(A(m-1)\cos(2\pi/(m^*B)),A(m-1)\sin(2\pi/(m^*B)))$, located at the two black dots on the perimeter of the wedge.}
		\label{fig 3}
		
	\end{figure}
\end{proof}

\begin{definition}
	Fix $A>0$ and an integer $B\geq 1$ and let 
	\[
	\r=\r(A,B)=\{f_0\}\cup\{f_{m,r}\colon (m,r)\in\G\}
	\]
	be the sequence of functions constructed in this section. We call $\r$ a \emph{rotational uniform covering frame}. 
\end{definition}

\begin{remark}
	Observe that $\hat{f_{m,r}}(\xi)=\hat{f_{m,e}}(r^{-1}\xi)$, so $f_{m,r}$ as a rotation of $f_{m,e}$. We also have $\hat{f_{m+1,e}}(\xi)=\hat{f_{1,e}}(\xi-mA)$, so each $f_{m,e}$ is a modulation of $f_{1,e}$. For this reason, we say rotational uniform covering frames are generated by rotations and modulations.
\end{remark}

\section{Rotational Fourier scattering transform}
\label{section Fourier}

Let $\r$ be a rotational uniform covering frame and recall that its index set is $\G=\{(m,r)\colon m\geq 1,\ r\in G_m\}$ with $G=G_1$. In order to derive a time-frequency scattering transform that is invariant under rotations $G$, we carefully study its group action on $\r$. This is carried out in the subsequent steps. 

\begin{enumerate} 
	\itemsep +1em
	\item 
	We define the following left and right group actions of $G$ on $\G$. For each $r\in G$ and $(m,s)\in\G$, let
	\[
	r(m,s)=(m,rs)
	\quad\text{and}\quad
	(m,s)r=(m,sr).
	\]
	Both operations are well-defined in view of the nested subgroup property (\ref{eq5}). 
	
	For any $k\geq 1$, we define the left and right group actions of $G$ on $\G^k$ by extending the above definitions. Indeed, each $r\in G$ acts on $p\in\G^k$ on the left according to
	\[
	r(p_1,p_2,\dots,p_k)
	=(rp_1,rp_2,\dots,rp_k),
	\]
	and similarly for its right action. 
	
	\item
	The previous observations lead us to the following decomposition. We saw that $G$ is a group action on $\G^k$. Then, there exists a set $\Q_k\subset\G^k$ such that we have the disjoint union
	\begin{equation}
	\label{eq Q}
	\G^k=\bigcup_{r\in G} r\Q_k.
	\end{equation}
	Explicitly, we have $\Q_k=\G_0 \times \G^{k-1}$, where $\G_0=\{(m,r)\colon m\geq 1,\ r\in G_m/G\}$ and $G_m/G$ is the quotient group, $G_m$ modulo $G$. 
	
	\item
	We can consider the action of $G$ on $\r$. Each $r\in G$ acts on $L^2(\R^d)$ by the rule 
	\[
	r\colon f(x)\mapsto f_r(x)=f(rx).
	\]
	Using the identity (\ref{eq11}), for any $r\in G$ and $(m,s)\in\G$, 
	\begin{equation}
		\label{eq6}
		(f_{m,s})_r(x)
		=f_{m,s}(rx)
		=f_{m,r^{-1}s}(x). 
	\end{equation}
	This defines a group action of $G$ on $\r$. Further, this calculation shows that the invariant subsets of $\r$ under $G$ are $\{f_0\}$ and $\{f_{m,s}\colon s\in G_m\}$, for each $m\geq 1$.
	
	\item
	For any $r\in G$ and $(m,s)\in\G$, by identity (\ref{eq6}) and a change of variables, 
	\[
	(f_r*f_{m,s})(x)
	=(f*(f_{m,s})_{r^{-1}})(rx)
	=(f*f_{m,rs})_r(x).
	\]
	By iterating this identity, for all integers $k\geq 1$ and $p\in\G^k$, we obtain
	\[
	(U[p]f_r)(x)
	=(U[rp]f)_r(x). 
	\]
	Since definition (\ref{eq10}) implies that $f_0$ is rotationally invariant, we see that
	\begin{equation}
	\label{eq1} 
	(U[p]f_r*f_0)(x)
	=((U[rp]f)_r*f_0)(x)
	=(U[rp]f*f_0)_r(x).
	\end{equation}
	
	\item
	We now carry out the same steps but for specific finite subsets of $\G^k$ and $\Q_k$. Fix integers $k\geq 1$ and $M\geq 1$. We define the finite set
	\[
	\G[M]
	=\{(m,r)\colon 1\leq m\leq M,\ r\in G_m\}.
	\]
	By construction of $\r$, we have
	\begin{equation}
	\label{eq2}
	|\hat{f_0}(\xi)|^2+\sum_{p\in\G[M]} |\hat{f_p}(\xi)|^2
	=
	\begin{cases}
	\ 1 &\text{if } |\xi|\leq AM, \\
	\ 0 &\text{if } |\xi|\geq A(M+1).
	\end{cases}
	\end{equation}
	Let $\G[M]^k$ be the product of $\G[M]$ with itself $k$ times. Using the same definition as before, we see that $G$ is a group action on $\G[M]^k$. Similar to before, there exists a finite set $\Q[M,K]\subset\G[M]^k$ such that we have the disjoint union 
	\begin{equation}
	\label{eq Q2}
	\G[M]^k=\bigcup_{r\in G} r\Q[M,k].
	\end{equation}
	Explicitly, we have $\Q[M,k]=\G_0[M] \times \G[M]^k$ where $\G_0[M]=\{(m,r)\colon 1\leq m\leq M,\ r\in G_m/G\}$. For all integers $k\geq 1$ and $p\in\G[M]^k$, the same argument as before shows that
	\begin{equation}
	\label{eq0} 
	(U[p]f_r*f_0)(x)
	=(U[rp]f*f_0)_r(x).
	\end{equation} 
\end{enumerate}

\begin{definition}
	Given a rotational uniform covering frame $\r$, the associated \emph{rotational Fourier scattering transform} $\S^\r$ is formally defined as
	\begin{align*}
	&\S^\r(f) \\
	&=\Big\{ |G|^{-1/2} \big\|(f*f_0)_r \big\|_{\ell^2_r(G)} \Big\} \cup \Big\{ \big\|(U[rq]f*f_0)_r \big\|_{\ell^2_r(G)} \colon q\in \Q_k,\ k\geq 1\Big\} \\
	&=\Big\{ |G|^{-1/2}\(\sum_{r\in G} |(f*f_0)_r|^2\)^{1/2} \Big\}
	\cup \Big\{ \(\sum_{r\in G} |(U[rq]f*f_0)_r|^2\)^{1/2} \colon q\in \Q_k,\ k\geq 1 \Big\}.
	\end{align*}
\end{definition}

\begin{definition}
	Given a rotational uniform covering frame $\r$ and integers $M,K\geq 1$, the associated \emph{truncated rotational Fourier scattering transform} $\S^\r[M,K]$ is formally defined as 
	\begin{align*}
	&\S^\r[M,K](f) \\
	&=\Big\{ |G|^{-1/2} \big\|(f*f_0)_r \big\|_{\ell^2_r(G)} \Big\}\cup \Big\{ \big\|(U[rq]f*f_0)_r \big\|_{\ell^2_r(G)} \colon q\in \Q[M,k],\ 1\leq k\leq K\Big\}.
	\end{align*}
\end{definition}

\begin{remark}
	We explain our choice of notation. Throughout this paper, we used $\F$ to denote a generic uniform covering frame whereas we used $\r$ to mean the rotational variant. By assumption, $\S_\F$ denotes the Fourier scattering transform and the subscript emphasizes that it depends on a fixed $\F$. Similarly, the super-script in $\S^\r$ indicates that the rotational Fourier scattering transform depends on a fixed $\r$. By Theorem \ref{thm rot}, $\r$ is a perfectly valid uniform covering frame, so it can be used as the underlying frame in the Fourier scattering transform, and this operator is denoted $\S_\r$. However, we emphasize that while both $\S_\r$ and $\S^\r$ use the same frame $\r$, they are \emph{distinct} operators, and likewise for their truncations. This can be easily seen from their definitions or by comparing their network structures. While the sub-script and super-script notation might seem odd, we use them to differentiate between these two operators. In the next two propositions, we derive their quantitative relationship.
\end{remark}

\begin{proposition}
	\label{prop1}
	Let $\r=\{f_0\}\cup \{f_{m,r}\colon (m,r)\in\G\}$ be a rotational uniform covering frame. For all $f,g\in L^2(\R^d)$, we have
	\begin{align*}
	\|\S^\r(f)\|_{L^2\ell^2}&= \|\S_\r(f)\|_{L^2\ell^2}, \\
	\|\S^\r(f)-\S^\r(g)\|_{L^2\ell^2}&\leq \|\S_\r(f)-\S_\r(g)\|_{L^2\ell^2}.
	\end{align*}
\end{proposition}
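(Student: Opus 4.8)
The plan is to prove both statements by expanding the squared $L^2\ell^2$ norms component-by-component and exploiting the disjoint union decomposition (\ref{eq Q}) together with the rotation invariance of Lebesgue measure. The guiding principle is that each aggregated component of $\S^\r$ is itself an $\ell^2_r(G)$ norm, so squaring it merely unpacks a sum over $G$, and the disjoint union $\G^k=\bigcup_{r\in G} r\Q_k$ lets me reassemble these partial sums into the full sum over $\G^k$ that appears in $\S_\r$.

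First I would establish the norm equality. For the $f_0$-component, squaring contributes $|G|^{-1}\int_{\R^d}\sum_{r\in G}|(f*f_0)_r(x)|^2\,dx$; since each $r\in G$ is a rotation, the change of variables $y=rx$ (with $|\det r|=1$) turns each summand into $\|f*f_0\|_{L^2}^2$, and the factor $|G|^{-1}$ exactly cancels the resulting $|G|$-fold sum, recovering the $f_0$-term of $\|\S_\r(f)\|_{L^2\ell^2}^2$. This is precisely the role of the normalizing constant $|G|^{-1/2}$: because $f_0$ is rotationally invariant, its orbit under $G$ is trivial, so the $|G|$ identical copies must be averaged rather than summed. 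For the level-$k$ components the same change of variables produces $\sum_{q\in\Q_k}\sum_{r\in G}\|U[rq]f*f_0\|_{L^2}^2$, and here no normalization is needed: invoking (\ref{eq Q}) to reindex the double sum $(r,q)\mapsto rq$ as a single sum over $p\in\G^k$ yields exactly the level-$k$ contribution to $\|\S_\r(f)\|_{L^2\ell^2}^2$. Summing over $k$ and adding the $f_0$-term gives the first equality.

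For the inequality, the crucial extra ingredient is the reverse triangle inequality for the $\ell^2_r(G)$ norm applied pointwise in $x$. Fixing a level $k$ and an index $q\in\Q_k$, and writing $a_q(x)=\big((U[rq]f*f_0)_r(x)\big)_{r\in G}$ and $b_q(x)=\big((U[rq]g*f_0)_r(x)\big)_{r\in G}$, I would use
\[
\big|\, \|a_q(x)\|_{\ell^2_r(G)}-\|b_q(x)\|_{\ell^2_r(G)}\,\big|\leq \|a_q(x)-b_q(x)\|_{\ell^2_r(G)}.
\]
Squaring, integrating in $x$, and applying the same change of variables and reindexing as above bounds the level-$k$ block of $\|\S^\r(f)-\S^\r(g)\|_{L^2\ell^2}^2$ by $\sum_{p\in\G^k}\|U[p]f*f_0-U[p]g*f_0\|_{L^2}^2$, which is the level-$k$ block of $\|\S_\r(f)-\S_\r(g)\|_{L^2\ell^2}^2$; the $f_0$-term is handled identically with the $|G|^{-1}$ factor. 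Summing all components yields the desired inequality.

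The mathematical content here is light, so I expect the main obstacle to be purely organizational: correctly matching each aggregated component of $\S^\r$ with the corresponding block of $\S_\r$, and verifying that the reindexing via (\ref{eq Q}) is applied to the right double sum. The key point to get right is that the union in (\ref{eq Q}) is \emph{disjoint}, so that every element of $\G^k$ is counted exactly once as $rq$ ranges over $G\times\Q_k$; this is what forces the first statement to be an equality rather than merely an inequality, and it is the only place where the precise definition of $\Q_k$ as $\G_0\times\G^{k-1}$ is used.
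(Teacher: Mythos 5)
Your proposal is correct and follows essentially the same route as the paper's proof: expand the squared $L^2\ell^2$ norm, use rotation invariance of the $L^2$ norm (the change of variables with $|\det r|=1$) together with the disjoint decomposition (\ref{eq Q}) to reindex $\sum_{q\in\Q_k}\sum_{r\in G}$ as $\sum_{p\in\G^k}$, and for the inequality apply the pointwise reverse triangle inequality in $\ell^2_r(G)$ before integrating. Your added remark correctly isolates why the first statement is an equality (disjointness means each $p\in\G^k$ is counted exactly once), though note the proof only needs the existence of the disjoint decomposition, not the explicit form $\Q_k=\G_0\times\G^{k-1}$.
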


\begin{proof}
	To prove the equality, we use the decomposition (\ref{eq Q}) to obtain
	\begin{align*}
	\|\S^\r(f)\|_{L^2\ell^2}^2
	&=\frac{1}{|G|}\sum_{r\in G} \|(f*f_0)_r\|_{L^2}^2 + \sum_{k=1}^\infty\sum_{q\in\Q_k}\sum_{r\in G} \|(U[rq]f*f_0)_r\|_{L^2}^2 \\
	&=\|f*f_0\|_{L^2}^2 + \sum_{k=1}^\infty\sum_{q\in\Q_k}\sum_{r\in G} \|U[rq]f*f_0\|_{L^2}^2 \\
	&=\|f*f_0\|_{L^2}^2 + \sum_{k=1}^\infty \sum_{p\in\G^k}\|U[p]f*f_0\|_{L^2}^2 
	=\|\S_\r(f)\|_{L^2\ell^2}^2. 
	\end{align*}
	To prove the inequality, we apply the reverse triangle inequality for the $\ell^2(G)$ norm in the definition of $\S^\r$ and the decomposition (\ref{eq Q}),
	\begin{align*}
	\|\S^\r(f)-\S^\r(g)\|_{L^2\ell^2}^2
	&\leq \frac{1}{|G|} \sum_{r\in G}\|(f*f_0)_r-(g*f_0)_r\|_{L^2}^2 \\
	&\quad\quad +\sum_{k=1}^\infty \sum_{q\in\Q_k}\sum_{r\in G} \|(U[rq]f*f_0)_r-(U[rq]g*f_0)_r\|_{L^2}^2 \\
	&= \|f*f_0-g*f_0\|_{L^2}^2 + \sum_{k=1}^\infty \sum_{q\in\Q_k}\sum_{r\in G} \|U[rq]f*f_0-U[rq]g*f_0\|_{L^2}^2 \\
	&= \|f*f_0-g*f_0\|_{L^2}^2 + \sum_{k=1}^\infty \sum_{p\in\G^k} \|U[p]f*f_0-U[p]g*f_0\|_{L^2}^2 \\
	& =\|\S_\r(f)-\S_\r(g)\|_{L^2\ell^2}^2.
	\end{align*}
\end{proof}

\begin{proposition}
	\label{prop2}
	Let $\r=\{f_0\}\cup \{f_{m,r}\colon (m,r)\in\G\}$ be a rotational uniform covering frame. There exists $C_2>0$ such that for any integers $M,K\geq 1$ and integer $N\leq C_2M$, 
	\[
	\|\S^\r[M,K](f)\|_{L^2\ell^2}
	\geq \|\S_\r[N,K](f)\|_{L^2\ell^2}.
	\] 	
\end{proposition}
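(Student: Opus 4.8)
The plan is to reduce the asserted inequality to a single set inclusion, $\P[N]\subset\G[M]$, and then to prove that inclusion by a geometric comparison between the cube appearing in (\ref{eq frame2}) and the radial truncation defining $\G[M]$ in (\ref{eq2}). Here $\P[N]\subset\G$ denotes the subset produced by applying (\ref{eq frame2}) to the frame $\r$, with associated constant $C_1$; by definition $\S_\r[N,K]$ is the truncated Fourier scattering transform built from $\r$ using the index sets $\P[N]^k$. Once the inclusion is available, the inequality is immediate from monotonicity, since each squared scattering norm is a sum of nonnegative terms indexed by $k$-fold products, and enlarging the index set only appends nonnegative terms.

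First I would record the truncated analogue of Proposition \ref{prop1},
\[
\|\S^\r[M,K](f)\|_{L^2\ell^2}^2
=\|f*f_0\|_{L^2}^2+\sum_{k=1}^{K}\sum_{p\in\G[M]^k}\|U[p]f*f_0\|_{L^2}^2,
\]
whose proof is the same computation as in Proposition \ref{prop1}: every $r\in G$ is an $L^2$-isometry, so the inner $\ell^2(G)$ averages collapse, and the decomposition (\ref{eq Q2}), $\G[M]^k=\bigcup_{r\in G}r\Q[M,k]$, rewrites the double sum over $q\in\Q[M,k]$ and $r\in G$ as the single sum over $p\in\G[M]^k$. The squared norm of $\S_\r[N,K](f)$ is the identical expression with $\G[M]^k$ replaced by $\P[N]^k$, so the whole proposition rests on showing $\P[N]^k\subset\G[M]^k$, equivalently $\P[N]\subset\G[M]$.

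The crux of the argument is this inclusion. Because $\G[M]=\{(m,r)\colon 1\leq m\leq M,\ r\in G_m\}$ contains all rotations at every level $m\leq M$, it suffices to check that each $(m,r)\in\P[N]$ has $m\leq M$. Fix such an index. The vanishing branch of (\ref{eq frame2}) forces $\supp(\hat{f_{m,r}})\subset\overline{Q_{C_1(N+1)}(0)}$, as the summands there are nonnegative. Conversely, picking $\omega_0\in\s^{d-1}$ with $\beta_{m,B}(\omega_0)>0$ and using (\ref{eq11}) together with $\eta_A(0)=1$, I find $\hat{f_{m,r}}(mA\,r\omega_0)=\beta_{m,B}(\omega_0)>0$, so the point $mA\,r\omega_0$, at distance $mA$ from the origin, lies in the support and hence in the cube. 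Comparing with the circumscribed ball of $\overline{Q_{C_1(N+1)}(0)}$ gives $mA\leq C_1(N+1)\sqrt{d}$. Taking $C_2=A/(2C_1\sqrt{d})$ and using $N\geq 1$ (hence $N+1\leq 2N$), the hypothesis $N\leq C_2 M$ yields $mA\leq 2C_1N\sqrt{d}\leq 2C_1C_2M\sqrt{d}=AM$, so $m\leq M$, as needed.

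Assembling the pieces, $\|\S_\r[N,K](f)\|_{L^2\ell^2}^2$ differs from $\|\S^\r[M,K](f)\|_{L^2\ell^2}^2$ only by the nonnegative terms indexed by $\G[M]^k\setminus\P[N]^k$, which gives the inequality. The hard part will be the inclusion $\P[N]\subset\G[M]$: the two index sets are defined by incompatible shapes, $\P[N]$ through cubes and $\G[M]$ radially, so the proof must absorb both the cube-versus-ball factor $\sqrt{d}$ and the off-by-one shift $N\mapsto N+1$ into the single constant $C_2$, which the choice $C_2=A/(2C_1\sqrt{d})$ accomplishes. I would also remark that if $C_2 M<1$ then no admissible integer $N$ exists and the statement is vacuous, so only $M$ large enough to admit some $N\geq 1$ require attention.
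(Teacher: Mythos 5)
Your proof is correct and follows essentially the same route as the paper: collapse the $\ell^2(G)$ norms via the rotation isometries and the disjoint decomposition (\ref{eq Q2}), then drop the nonnegative terms indexed by $\G[M]^k\setminus\P[N]^k$ once the inclusion $\P[N]^k\subset\G[M]^k$ is in hand. The only difference is that the paper simply asserts this inclusion ``by comparing the identities (\ref{eq frame2}) and (\ref{eq2}),'' whereas you verify it explicitly---using the vanishing branch of (\ref{eq frame2}) to trap $\supp(\hat{f_{m,r}})$ in $\overline{Q_{C_1(N+1)}(0)}$, exhibiting the point $mA\,r\omega_0$ of norm $mA$ in that support, and comparing with the circumscribed ball to obtain the admissible constant $C_2=A/(2C_1\sqrt{d})$---which fills in a detail the paper leaves implicit rather than departing from its argument.
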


\begin{proof}
	By comparing the identities (\ref{eq frame2}) and (\ref{eq2}), we see that there exists a constant $C_2>0$ such that whenever $N\leq C_2M$, we have $\P[N]^k\subset \G[M]^k$. This fact and the disjoint decomposition \eqref{eq Q2} imply
	\begin{align*}
	\|\S^\r[M,K](f)\|_{L^2\ell^2}^2
	&=\frac{1}{|G|}\sum_{r\in G} \|(f*f_0)_r\|_{L^2}^2 +\sum_{k=1}^K\sum_{q\in\Q[M,k]}\sum_{r\in G}\|(U[rq]f*f_0)_r\|_{L^2}^2 \\
	&=\|f*f_0\|_{L^2}^2+ \sum_{k=1}^K\sum_{p\in\G[M]^k} \|U[p]f*f_0\|_{L^2}^2 \\
	&\geq \|f*f_0\|_{L^2}^2+ \sum_{k=1}^K\sum_{p\in\P[N]^k} \|U[p]f*f_0\|_{L^2}^2
	=\|\S_\r[N,K](f)\|_{L^2\ell^2}. 
	\end{align*}
\end{proof}

We are ready to state and prove the main theorem, which shows that $\S^\r$ and $\S^\r[M,K]$ are effective feature extractors. We already did most of the work in the construction of $\S^\r$ and $\S^\r[M,K]$, and in proving Propositions \ref{prop1} and \ref{prop2}. The basic strategy is to quantitatively relate $\S^\r$ to $\S_\r$ and then use known results about the latter.

\begin{theorem}
	\label{thm1}
	Let $\r=\{f_0\}\cup\{f_{m,r}\colon (m,r)\in\G\}$ be a rotational uniform covering frame.   
	\begin{enumerate}[(a)]
		\itemsep+1em
		
		\item
		\underline{$G$-invariance}: For all integers $M,K\geq 1$, $f\in L^2(\R^d)$, and $r\in G$,
		\[
		\S^\r(f_r) = \S^\r(f)
		\quad\text{and}\quad
		\S^\r[M,K](f_r)=\S^\r[M,K](f). 
		\]
		
		\item 
		\underline{Upper bound}: For all integers $M,K\geq 1$ and $f\in L^2(\R^d)$, 
		\[
		\|\S^\r[M,K](f)\|_{L^2\ell^2}
		\leq\|\S^\r(f)\|_{L^2\ell^2}
		=\|f\|_{L^2}.
		\]
		
		\item
		\underline{Lower bound}: Let $\epsilon\in [0,1)$ and $R>0$. For sufficiently large $M,K\geq 1$, there exists $C\in (0,1)$ depending on $\r,\epsilon,M,K$, such that for all $(\epsilon,R)$ band-limited functions $f\in L^2(\R^d)$,  
		\[
		\|\S^\r[M,K](f)\|_{L^2\ell^2} 
		\geq C\|f\|_{L^2}.
		\]
		
		\item
		\underline{Non-expansiveness}: For all integers $M,K\geq 1$ and $f,g\in L^2(\R^d)$, 
		\[
		\|\S^\r[M,K](f)-\S^\r[M,K](g)\|_{L^2\ell^2}
		\leq \|\S^\r(f)-\S^\r(g)\|_{L^2\ell^2}
		\leq \|f-g\|_{L^2}.
		\]
		
		\item
		\underline{Translation contraction}: There exists $C>0$ depending only on $\r$ such that for all integers $M,K\geq 1$, $f\in L^2(\R^d)$, and $y\in\R^d$,
		\begin{align*}
			&\|\S^\r[M,K](T_y f)-\S^\r[M,K](f)\|_{L^2\ell^2} \\
			&\quad\quad\leq \|\S^\r(T_y f)-\S^\r(f)\|_{L^2\ell^2} 
			\leq C|y|\|\nabla f_0\|_{L^1} \|f\|_{L^2}.
		\end{align*}
		
		\item
		\underline{Additive diffeomorphism contraction}: There exists a constant $C>0$ such that for all $\epsilon\in [0,1)$, $R>0$, $(\epsilon,R)$ band-limited $f\in L^2(\R^2)$, and $\tau\in C^1(\R^2;\R^2)$ with $\|\nabla\tau\|_{L^\infty}\leq 1/4$,
		\begin{align*}
			&\|\S^\r[M,K](T_\tau f)-\S^\r[M,K](f)\|_{L^2\ell^2} \\
			&\quad\quad\leq \|\S^\r(T_\tau f)-\S^\r(f)\|_{L^2\ell^2} 
			\leq C(R\|\tau\|_{L^\infty}+\epsilon)\|f\|_{L^2}.
		\end{align*}
	\end{enumerate}
\end{theorem}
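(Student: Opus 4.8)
The plan is to treat part (a), the $G$-invariance, as the one genuinely new computation, and to obtain all of the quantitative estimates (b)--(f) by the comparison principle already packaged in Propositions \ref{prop1} and \ref{prop2}, combined with the corresponding properties of the ordinary Fourier scattering transform $\S_\r$ proved in \cite[Theorems 3.6 and 4.3]{czaja2016analysis}. A single observation drives the truncation side of the argument: since $\G[M]\subset\G$ forces $\Q[M,k]\subset\Q_k$ for $1\leq k\leq K$, the output of $\S^\r[M,K]$ is literally a sub-collection of the components of $\S^\r$, with the zeroth-order term $|G|^{-1/2}\|(f*f_0)_r\|_{\ell^2_r(G)}$ common to both. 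Hence for any inputs the squared $L^2\ell^2$ norm of a truncated transform, or of a truncated difference, is a partial sum of the corresponding untruncated quantity, which immediately yields the left-hand inequalities in (b), (d), (e), and (f).

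For part (a) I would fix $s\in G$ and expand $\S^\r(f_s)$ one component at a time. The component indexed by $q\in\Q_k$ is $\big(\sum_{r\in G}|(U[rq]f_s*f_0)_r|^2\big)^{1/2}$. Applying the group-action identity (\ref{eq1}) with multi-index $rq$ and rotation $s$, and using associativity of the left action $s(rq)=(sr)q$, gives $(U[rq]f_s*f_0)_r=(U[(sr)q]f*f_0)_{sr}$. Substituting $t=sr$, which is a bijection of $G$, turns the sum over $r$ into $\sum_{t\in G}|(U[tq]f*f_0)_t|^2$, i.e. exactly the $q$-component of $\S^\r(f)$; the zeroth-order term is handled the same way after noting that rotational invariance of $f_0$ from (\ref{eq10}) gives $f_s*f_0=(f*f_0)_s$. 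Thus $\S^\r(f_s)=\S^\r(f)$. The truncated statement is identical word for word, using (\ref{eq0}) in place of (\ref{eq1}) and the fact that $\G[M]^k=\bigcup_{r\in G}r\Q[M,k]$ from (\ref{eq Q2}) is $G$-invariant.

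The remaining parts then reduce cleanly. For (b), the equality $\|\S^\r(f)\|_{L^2\ell^2}=\|f\|_{L^2}$ is Proposition \ref{prop1} composed with the norm-preservation of $\S_\r$ from \cite{czaja2016analysis}. For (d), the right-hand inequality is the inequality half of Proposition \ref{prop1} followed by the non-expansiveness of $\S_\r$; and (e), (f) have the same shape, applying Proposition \ref{prop1} to the pairs $(T_yf,f)$ and $(T_\tau f,f)$ and then invoking the translation- and diffeomorphism-contraction bounds for $\S_\r$, whose constants (built from the same $f_0$ and the same $R,\epsilon$) carry over unchanged. The only part that uses Proposition \ref{prop2} is the lower bound (c): given $\epsilon$ and $R$, I would first invoke the lower bound for the truncated Fourier scattering transform to produce $N,K$ and a constant $C\in(0,1)$ valid for all $(\epsilon,R)$ band-limited $f$, then choose $M$ with $N\leq C_2M$, so that Proposition \ref{prop2} gives $\|\S^\r[M,K](f)\|_{L^2\ell^2}\geq\|\S_\r[N,K](f)\|_{L^2\ell^2}\geq C\|f\|_{L^2}$.

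The main obstacle is conceptual rather than computational and lives entirely in part (a): one must verify that averaging in the orbit norm $\ell^2_r(G)$ is precisely what absorbs a rotation, and this hinges on the reindexing $t=sr$ being a bijection of $G$ together with the associativity $s(rq)=(sr)q$ of the left action on $\G^k$, so that the finite group $G$ — and not the larger $G_m$ — must be the one used to build $\S^\r$. A secondary subtlety, in (c), is the order of quantifiers: the band-limiting parameters $\epsilon,R$ fix the admissible $N,K$ through the cited result first, and only afterward is $M$ enlarged to dominate $N$ via the constant $C_2$. Everything else is a routine reduction to the prior paper.
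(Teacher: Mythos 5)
Your proposal is correct and follows essentially the same route as the paper: part (a) via the intertwining identity (\ref{eq1}) (resp.\ (\ref{eq0})) together with reindexing $t=sr$ over the finite group $G$, and parts (b)--(f) by observing that the truncated transform's components form a sub-collection of the full transform's (so truncation only drops nonnegative terms), then invoking Propositions \ref{prop1} and \ref{prop2} and the corresponding bounds for $\S_\r$ from \cite{czaja2016analysis}. Your explicit handling of the quantifier order in (c) --- fixing $N,K$ from the cited lower bound first and then enlarging $M$ so that $N\leq C_2M$ --- matches the paper's intended reading and is stated more carefully than in the paper's proof.
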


\begin{proof}
	\indent
	\begin{enumerate}[(a)]\itemsep+.5em
		\item 
		For the first term in $\S^\r(f_r)$ and $\S^\r[M,K](f_r)$, we use that $f_0$ is rotationally invariant to obtain 
		\[
		\sum_{s\in G} |(f_r*f_0)_s|^2
		=\sum_{s\in G} |(f*f_0)_{rs}|^2
		=\sum_{s\in G} |(f*f_0)_s|^2.
		\]
		For the remaining terms in $\S^\r(f_r)$, fix an integer $k\geq 1$ and $q\in \Q_k$. By identity (\ref{eq1}) and re-indexing the following sum, we have
		\[
		\sum_{s\in G} |(U[sq]f_r*f_0)_s|^2
		=\sum_{s\in G} |(U[rsq]f*f_0)_{rs}|^2
		=\sum_{s\in G} |(U[sq]f*f_0)_s|^2.
		\]
		This proves that $\S^\r(f_r)=\S^\r(f)$. For the remaining terms in $\S^\r[M,K](f_r)$, fix an integer $1\leq k\leq K$ and $q\in\Q[M,k]$. By identity (\ref{eq0}) and re-indexing the following sum, we have
		\[
		\sum_{s\in G} |(U[sq]f_r*f_0)_s|^2
		=\sum_{s\in G} |(U[rsq]f*f_0)_{rs}|^2
		=\sum_{s\in G} |(U[sq]f*f_0)_s|^2.
		\]
		This proves that $\S^\r[M,K](f_r)=\S^\r[M,K](f)$ and $\S^\r(f_r)=\S^\r(f)$.
		
		\item
		By definition and Proposition \ref{prop1}, we have
		\[
		\|\S^\r[M,K](f)\|_{L^2\ell^2}
		\leq \|\S^\r(f)\|_{L^2\ell^2}
		=\|\S_\r(f)\|_{L^2\ell^2}.
		\]
		We apply \cite[Theorem 3.6a]{czaja2016analysis} to complete the proof.
		
		\item
		By Proposition \ref{prop2}, there exists $C_2>0$ such that for all $N\leq C_2M$, we have 
		\[
		\|\S^\r[M,K](f)\|_{L^2\ell^2} 
		\geq \|\S_\r[N,K](f)\|_{L^2\ell^2}.
		\]
		Using the lower bound for $\S_\r[N,K]$, see \cite[Theorem 4.5b]{czaja2016analysis}, completes the proof. 
		
		\item
		By definition and Proposition \ref{prop1}, we have
		\[
		\|\S^\r[M,K](f)-\S^\r[M,K](g)\|_{L^2\ell^2}
		\leq \|\S_\r(f)-\S_\r(g)\|_{L^2\ell^2}.
		\]
		Since $\S_\r$ is non-expansive, see \cite[Theorem 3.6b]{czaja2016analysis}, the result follows.
		
		\item
		By definition and Proposition \ref{prop1}, 
		\[
		\|\S^\r[M,K](T_yf)-\S^\r[M,K](f)\|_{L^2\ell^2}
		\leq \|\S_\r(T_yf)-\S_\r(f)\|_{L^2\ell^2}.
		\]
		The rest follows by using the translation estimate for $\S_\r$, see \cite[Theorem 3.6c]{czaja2016analysis}.
		
		\item
		Again, we use the definition and Proposition \ref{prop1} to deduce
		\[
		\|\S^\r[M,K](T_\tau f)-\S^\r[M,K](f)\|_{L^2\ell^2}
		\leq \|\S_\r(T_\tau f)-\S_\r(f)\|_{L^2\ell^2}.
		\]
		We use the diffeomorphism estimate, \cite[Theorem 3.6d]{czaja2016analysis}, to complete the proof. 
	\end{enumerate}
\end{proof}

\begin{remark}
	We saw in the proof of Theorem \ref{thm1} that if we replaced the $\ell^2(G)$ norm in the definition of $\S^\r$ and $\S^\r[M,K]$ with the $\ell^p(G)$ norm for any $1\leq p<\infty$, then the resulting operators would still be $G$-invariant. Additionally, since $G$ is a finite set, all $\ell^p(G)$ norms are equivalent up to constants depending on $p$ and $|G|$, so our estimates carry generalize to the $\ell^p(G)$ case. We chose the $\ell^2(G)$ norm in the definitions of $\S^\r$ and $\S^\r[M,K]$ because it is more natural to work with a Hilbert space as opposed to a Banach space. 
\end{remark}

\section{Discrete and finite uniform covering frames}
\label{section finite}

For other data analysis and signal processing applications, all functions must be converted into arrays. We show how to construct discrete and finite versions of uniform covering frames. Throughout this section, let $\F$ be a uniform covering frame with index set $\P$, and note that $\F$ does not necessarily have to be a rotational uniform covering frame.

We introduce the following standard notation to simplify the formulas in this section. For multi-integers $n,m\in\Z^d$, we write $m\leq n$ to mean $m_j\leq n_j$ for all $1\leq j\leq d$. If $n>0$, let
\[
\vol(n) = \prod_{j=1}^d n_j
\quad\text{and}\quad
\frac{m}{n} = \(\frac{m_1}{n_1},\frac{m_2}{n_2},\dots,\frac{m_d}{n_d}\).
\]

\subsection{Discrete frames}

To discretize uniform covering frames, we take advantage of the fact that $f_p$ is band-limited for each $p\in\P$ and use ideas from classical sampling theory. By definition of a uniform covering frame, the diameter of $\supp(\hat{f_p})$ is bounded uniformly in $p$, see \cite[Proposition 2.5]{czaja2016analysis}. Let $S_p=(S_{p,1},S_{p,2}\dots,S_{p,d})$ be the side lengths of the smallest rectangle that contains $\supp(\hat{f_p})$. Similarly, let $S_0=(S_{0,1},S_{0,2},\dots,S_{0,d})$ be the side lengths of the smallest rectangle that contains $\supp(\hat{f_0})$. Then, for $n\in\Z^d$, let
\[
f_{0,n}(x)= \vol(S_0)^{-1/2} f_0 \(x+\frac{n}{S_0}\) 
\quad\text{and}\quad 
f_{p,n}(x)= \vol(S_p)^{-1/2} f_p\(x+\frac{n}{S_p}\)
\]
\begin{definition}
	We call the sequence of functions 
	\[
	\F_{\text{discrete}}
	=\{f_{0,n}\colon n\in\Z^d\}\cup \{f_{p,n}\colon p\in\P,\ n\in\Z^d\}	
	\]
	a \emph{discrete rotational uniform covering frame}.
\end{definition}

\begin{proposition}
	$\F_{\text{discrete}}$ is a Parseval frame for $L^2(\R^d)$: for all $f\in L^2(\R^d)$, 
	\[
	\sum_{n\in\Z^d} |\<f,f_{0,n}\>|^2+\sum_{p\in\P} |\<f,f_{p,n}\>|^2
	=\|f\|_{L^2}^2. 
	\] 
\end{proposition}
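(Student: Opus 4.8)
The plan is to show that $\F_{\text{discrete}}$ is a Parseval frame by reducing the claim, via the Plancherel theorem, to a sampling-theoretic identity on each band-limited piece and then summing using the frame condition (\ref{eq frame}). The key observation is that the functions $f_{p,n}$ are, up to the normalization $\vol(S_p)^{-1/2}$, integer translates (in the appropriately rescaled lattice $\frac{1}{S_p}\Z^d$) of $f_p$, and that $\supp(\hat{f_p})$ fits inside a rectangle of side lengths $S_p$. This is exactly the setting in which the translates of a single band-limited function form a Parseval frame for the corresponding Paley--Wiener space.

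First I would rewrite each inner product $\<f,f_{p,n}\>$ on the Fourier side. Using $\widehat{T_y g}(\xi)=e^{-2\pi i y\cdot\xi}\hat g(\xi)$ together with Plancherel, one gets
\[
\<f,f_{p,n}\>
=\vol(S_p)^{-1/2}\int_{\R^d}\hat f(\xi)\,\overline{\hat{f_p}(\xi)}\,e^{-2\pi i \frac{n}{S_p}\cdot\xi}\,d\xi.
\]
Since $\hat f \,\overline{\hat{f_p}}$ is supported in the rectangle of side lengths $S_p$, the quantity $\vol(S_p)^{-1/2}\<f,f_{p,n}\>$ is precisely the $n$-th Fourier coefficient of $\hat f\,\overline{\hat{f_p}}$, viewed as a periodic function on the fundamental domain of the lattice $S_p\Z^d$ (equivalently, a function on the torus $\R^d/(S_p\Z^d)$ after tiling). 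Parseval's identity for Fourier series on this torus then yields
\[
\sum_{n\in\Z^d}|\<f,f_{p,n}\>|^2
=\int_{\R^d}|\hat f(\xi)|^2\,|\hat{f_p}(\xi)|^2\,d\xi,
\]
and the analogous identity holds for $f_0$ in place of $f_p$. The normalization $\vol(S_p)^{-1/2}$ is chosen exactly so that these constants cancel and no spurious volume factor survives.

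Summing the displayed identity over $p\in\P$ and including the $f_0$ term, I would interchange the sum and the integral (justified by Tonelli, since all integrands are non-negative) to obtain
\[
\sum_{n\in\Z^d}|\<f,f_{0,n}\>|^2+\sum_{p\in\P}\sum_{n\in\Z^d}|\<f,f_{p,n}\>|^2
=\int_{\R^d}|\hat f(\xi)|^2\Big(|\hat{f_0}(\xi)|^2+\sum_{p\in\P}|\hat{f_p}(\xi)|^2\Big)\,d\xi.
\]
Applying the frame condition (\ref{eq frame}), the bracketed factor equals $1$ pointwise, so the right-hand side collapses to $\|\hat f\|_{L^2}^2=\|f\|_{L^2}^2$ by Plancherel, which is the desired Parseval identity.

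The main obstacle is the careful justification of the Fourier-series step: one must verify that because $\supp(\hat{f_p})$ lies in a rectangle of side lengths exactly $S_p$, the periodization of $\hat f\,\overline{\hat{f_p}}$ over the lattice $S_p\Z^d$ produces no overlap (aliasing), so that the integral over $\R^d$ coincides with the integral over a single fundamental domain and Parseval for Fourier series applies cleanly. A minor point to address is that the product $\hat f\,\overline{\hat{f_p}}$ lies in $L^2$ of the fundamental domain, which is needed to invoke the $L^2$ theory of Fourier series; this follows from $\hat f\in L^2(\R^d)$ and $\hat{f_p}\in L^\infty$ on its compact support.
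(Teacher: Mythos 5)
Your proposal is correct and follows essentially the same route as the paper's own proof: apply Plancherel to write $\<f,f_{p,n}\>$ on the Fourier side, use Parseval for the orthonormal system of normalized exponentials on a rectangle of side lengths $S_p$ containing $\supp(\hat{f_p})$ to get $\sum_{n\in\Z^d}|\<f,f_{p,n}\>|^2=\int_{\R^d}|\hat f(\xi)|^2|\hat{f_p}(\xi)|^2\,d\xi$, and then sum over $p$ via Tonelli and invoke the frame condition (\ref{eq frame}). Your explicit attention to the no-aliasing point (the support fitting in a single fundamental domain of $S_p\Z^d$) and to $\hat f\,\overline{\hat{f_p}}\in L^2$ makes rigorous a step the paper leaves implicit, but it is the same argument.
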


\begin{proof}
	We will repeatedly use the well-known fact that for any multi-integer $S\in\Z^d$ with $S>0$, the sequence of functions,
	\[
	\{\vol(S)^{-1/2}e^{-2\pi i\xi\cdot n/S}\colon n\in\Z^d\},
	\]
	is an orthonormal basis for $L^2(\R^d)$ functions that are supported in any rectangle of side length $S$. 
	
	By Parsvel, we have
	\[
	\<f,f_{p,n}\>
	=\vol(S_p)^{-1/2}\int_{\R^d} \hat f(\xi) \overline{\hat{f_p}(\xi)} e^{-2\pi i\xi\cdot n/S_p}\ d\xi.
	\]
	Since $\hat{f_p}$ is supported in a translate of a rectangle with side length $S$, we see that
	\begin{equation}
	\label{eq13}
	\sum_{n\in\Z^d} |\<f,f_{p,n}\>|^2
	=\int_{\R^d} |\hat f(\xi)|^2 |\hat{f_p}(\xi)|^2\ d\xi.
	\end{equation}
	We use the same argument to handle terms that involve the inner product of $f$ with $f_{0,n}$. Doing so, we obtain
	\begin{equation}
	\label{eq12}
	\sum_{n\in\Z^d} |\<f,f_{0,n}\>|^2
	=\int_{\R^d} |\hat f(\xi)|^2 |\hat{f_0}(\xi)|^2 \ d\xi. 
	\end{equation}	
	Combining equations (\ref{eq13}), (\ref{eq12}) and (\ref{eq frame}), we conclude that
	\begin{align*}
	&\sum_{n\in\Z^d} |\<f,f_{0,n}\>|^2+\sum_{p\in\P} \sum_{n\in\Z^d} |\<f,f_{p,n}\>|^2 \\
	&\quad =\int_{\R^d} |\hat f(\xi)|^2 |\hat{f_0}(\xi)|^2 \ d\xi + \sum_{p\in\P} \int_{\R^d} |\hat f(\xi)|^2 |\hat{f_{m,r}}(\xi)|^2\ d\xi =\int_{\R^d} |\hat f(\xi)|^2 \ d\xi. 
	\end{align*}
\end{proof}

\subsection{Finite frames}

To define the finite uniform covering frames, we will replace the Euclidean Fourier transforms with the $d$-dimensional discrete Fourier transform (DFT), and replace $\hat{f_p}$ with its samples on a lattice. 

Suppose $F$ is an array of size $N$, which we want to decompose using the finite uniform covering frames. Let $\hat{F}(\xi)$ be the $d$-dimensional Fourier series of $F$ evaluated at $\xi\in\R^d$,
\[
\hat F(\xi)
=\sum_{1\leq n\leq N} F(n) e^{-2\pi i\xi\cdot n/N}. 
\]
Its DFT is $N$-periodic and we view them as samples of its Fourier series evaluated at the points $m_j=-\overline N_j,-\overline N_j+1,\dots,\overline N_j$, where $\overline N\in\R^d$ is defined as $\overline N_j=(N_j-1)/2$. Call this set of points $\supp(\hat F)$.

Let $\P_N$ be the finite subset of $\P$ such that $p\in\P_N$ if and only if $\supp(\hat{f_p})$ has nontrivial intersection with the rectangle
\[
[-\overline N_1,\overline N_1]\times[-\overline N_2,\overline N_2]\times \cdots\times [-\overline N_d,\overline N_d].
\]
Let $S_p-1\in\Z^d$ be the side length of the smallest rectangle containing $\supp(\hat{f_p})\cap\supp(\hat F)$, and let $S_0$ be defined analogously. For each $p\in\P_N$ and $0\leq n\leq S_p-1$, let $F_{p,n}$ be an array of size $N$, which we define according to its DFT, 
\begin{align*}
\hat{F_{p,n}}(m)&=F_p(m)E_{p,n}(m), \\
F_p(m)&=\hat{f_p}(m), \\
E_{p,n}(m)&= \vol(S_p)^{-1/2} e^{-2\pi im\cdot n/S_p}.
\end{align*}
Similarly, we define the array $F_{0,n}$ by
\begin{align*}
\hat{F_{0,n}}(m)&=F_0(m)E_{0,n}(m), \\
F_0(m)&=\hat{f_0}(m), \\
E_{0,n}(m)&= \vol(S_0)^{-1/2} e^{-2\pi im\cdot n/S_0}.
\end{align*}

If $F$ and $G$ are arrays of size $N>0$, their Frobenius inner product is 
\[
\<F,G\>=\sum_{1\leq n\leq N} F(n)\overline{G(n)}
\]
and the Frobenius norm is $\|F\|_2=\sqrt{\<F,F\>}$. 

\begin{definition}
	Given $N=(N_1,N_2,\dots,N_d)>0$, we call the set
	\[
	\F_{\text{finite}}
	=\{F_{0,n}\colon 0\leq n\leq S_0-1\}\cup \{F_{p,n}\colon p\in\P_N, 0\leq n\leq S_p-1\},
	\]
	a \emph{finite uniform covering frame}. 
\end{definition}

\begin{proposition}
	For any $N=(N_1,N_2,\dots,N_d)>0$, the set $\F_{\text{finite}}$ is a finite Parseval frame for arrays of size $N$: for all arrays $F$ of size $N$,
	\[
	\sum_{0\leq n\leq S_0-1}|\<F,F_{0,n}\>|^2 +\sum_{p\in\P_N}\sum_{0\leq n\leq S_p-1} |\<F,F_{p,n}\>|^2
	=\|F\|_2^2. 
	\]
\end{proposition}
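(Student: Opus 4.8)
The plan is to mirror the proof of the preceding proposition for $\F_{\text{discrete}}$, replacing the continuous Plancherel theorem by its discrete analogue and the orthonormal family of complex exponentials on a rectangle by the orthonormal family of discrete exponentials on a finite lattice. First I would invoke the discrete Parseval identity for the $d$-dimensional DFT, which expresses the Frobenius inner product $\<F,F_{p,n}\>$ as a sum over the frequency samples $m\in\supp(\hat F)$ of $\hat F(m)\,\overline{\hat{F_{p,n}}(m)}$. Substituting the definitions $\hat{F_{p,n}}(m)=F_p(m)E_{p,n}(m)$ and $F_p(m)=\hat{f_p}(m)$ turns this into an inner product $\<g_p,E_{p,n}\>$, where $g_p(m)=\hat F(m)\,\overline{\hat{f_p}(m)}$ and the sum is taken over the finite set of lattice points.

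The key structural fact, which is the finite analogue of the sampling-theory statement used in the discrete case, is that the family $\{E_{p,n}\colon 0\le n\le S_p-1\}$ is an orthonormal basis for the space of functions supported on the lattice points contained in the smallest rectangle enclosing $\supp(\hat{f_p})\cap\supp(\hat F)$; by construction this rectangle has side lengths $S_p-1$, hence exactly $\vol(S_p)$ lattice points, matching the number of modulations $E_{p,n}$. Granting this, Parseval over this orthonormal basis yields $\sum_n|\<F,F_{p,n}\>|^2$ as a constant multiple of $\sum_m|\hat F(m)|^2\,|\hat{f_p}(m)|^2$, and the identical computation handles the $f_0$ terms.

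Summing over $p\in\P_N$ and adding the $f_0$ contribution, I would factor out $|\hat F(m)|^2$ and collapse the bracketed sum using the frame condition (\ref{eq frame}). The one point that needs care here is that $\F_{\text{finite}}$ includes only $p\in\P_N$, whereas (\ref{eq frame}) sums over all of $\P$; this is harmless because, by the definition of $\P_N$, any $p\notin\P_N$ has $\supp(\hat{f_p})$ disjoint from the rectangle of frequency samples, so $\hat{f_p}(m)=0$ for every $m\in\supp(\hat F)$, and thus the restricted identity $|\hat{f_0}(m)|^2+\sum_{p\in\P_N}|\hat{f_p}(m)|^2=1$ holds exactly on $\supp(\hat F)$. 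A final application of discrete Parseval to $\sum_m|\hat F(m)|^2$ then returns $\|F\|_2^2$.

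I expect the main obstacle to be bookkeeping rather than conceptual: pinning down the normalization of the DFT and its inverse so that the factors of $\vol(N)$ introduced by discrete Parseval cancel against those carried by the definitions of $E_{p,n}$ and $F_{p,n}$, leaving the constant exactly $1$. The orthonormality and completeness of $\{E_{p,n}\}$ on the $\vol(S_p)$-point lattice, while standard, should also be stated carefully, since it is the finite-dimensional replacement for the Shannon-type sampling argument and is precisely what dictates the choice of the side lengths $S_p-1$.
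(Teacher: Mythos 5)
Your proposal is correct and follows essentially the same route as the paper's proof: discrete Parseval for the DFT, orthonormality of the family $\{E_{p,n}\colon 0\leq n\leq S_p-1\}$ on the $\vol(S_p)$-point lattice rectangle determined by $\supp(\hat{f_p})\cap\supp(\hat F)$, and the frame condition (\ref{eq frame}) to collapse the sum over frame elements. Your only addition is to justify explicitly why restricting the frame identity from $\P$ to $\P_N$ is harmless on $\supp(\hat F)$ (since $\hat{f_p}$ vanishes there for $p\notin\P_N$), a point the paper's proof leaves implicit when it invokes (\ref{eq frame}).
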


\begin{proof}
	By Parseval, we have
	\[
	|\<F,F_{p,n}\>|^2
	=\frac{1}{\vol(N)}\ \Big|\sum_{-\overline N\leq m\leq \overline N} \hat{F}(m) \overline{F_p(m)} \overline{E_{p,n}(m)} \Big|^2.
	\]
	Since $\{E_{p,n}\colon 0\leq n\leq S_p-1\}$ is an orthonormal basis for the support of $F_p$ with respect to the inner product $\<\cdot,\cdot\>$, we have
	\begin{equation}
	\label{eq17}
	\sum_{0\leq n\leq S_p-1} |\<F,F_{p,n}\>|^2
	=\frac{1}{\vol(N)} \sum_{-\overline N\leq m\leq \overline N} |\hat{F}(m)|^2 |F_p(m)|^2.
	\end{equation}
	Repeating the same argument shows that
	\begin{equation}
	\label{eq18}
	\sum_{0\leq n\leq S_0-1} |\<F,F_{0,n}\>|^2
	=\frac{1}{\vol(N)} \sum_{-\overline N\leq m\leq \overline N} |\hat{F}(m)|^2 |F_0(m)|^2.
	\end{equation}
	Combining the equations (\ref{eq17}), (\ref{eq18}), (\ref{eq frame}) and recalling that $F_p$ are samples of $\hat{f_p}$, we see that
	\begin{align*}
	&\sum_{0\leq n\leq S_0-1}|\<F,F_{0,n}\>|^2 +\sum_{p\in\P_N}\sum_{0\leq n\leq S_p-1} |\<F,F_{p,n}\>|^2 \\
	&\quad= \frac{1}{\vol(N)} \sum_{-\overline N\leq m\leq \overline N} |\hat F(m)|^2\ \( |\hat{F_0}(m)|^2+\sum_{p\in\P_N} |\hat{F_p}(m)|^2\) 
	= \|F\|_2^2.
	\end{align*}
\end{proof}

\section{Relationship with directional representations}
\label{section directional}

While our construction of rotational uniform covering frames is motivated by neural networks, since they are also time-frequency representations and are partially generated by rotations, it is natural to ask whether they are related to recent developments in directional Fourier analysis. In order to make precise comparisons, we first summarize several important works on directional Fourier and wavelet analysis. 

\begin{enumerate}
	\item 
	Cand\`{e}s introduced a wavelet system that decomposes a function according to its direction, scale, and location. For an appropriate non-zero function $\psi$, see \cite[Definition 1]{candes1999harmonic}, and $(a,b,u)\in \R^+\times\R\times\s^{d-1}$, a \emph{ridgelet} is
	\begin{equation}
	\label{eq15}
	\psi_{a,b,u}(x)=a^{-1/2}\psi(a^{-1}(u\cdot x-b)).
	\end{equation}
	To see why this function is directionally sensitive, observe that $\psi_{a,b,u}$ is constant on hyperplanes perpendicular to $u$ and oscillates in the direction of $u$. Cand\`{e}s also introduced a discrete ridgelet system, and proved that both the continuous and discrete families are Parseval frames for $L^2(\R^d)$, see \cite[Theorems 1 and 2]{candes1999harmonic} for precise statements. 
	
	\item
	In contrast to the above wavelet inspired approach, Grafakos and Sansing constructed directional time-frequency representations. For a non-zero $g\in\S(\R)$ and $(m,t,u)\in\R\times\R\times\s^{d-1}$, a \emph{weighted Gabor ridge function} is
	\[
	g_{m,t,u}(x)
	=D^{(n-1)/2}(g_{m,t})(u\cdot x),
	\quad\text{where}\quad
	g_{m,t}(s)=e^{2\pi im\cdot(s-t)}g(s-t),
	\]
	and $D^{(n-1)/2}$ is the Fourier multiplier with symbol $|\xi|^{(n-1)/2}$. Similar to ridgelets, a weighted Gabor ridge function is constant along hyperplanes perpendicular to $u$ and oscillates in the direction of $u$. They proved that this continuous family is a Parseval frame for $L^2(\R^d)$, see \cite[Theorem 3]{grafakos2008gabor}, but were unable to obtain a discrete Parseval frame from this continuous family. This discretization problem was partially resolved: by omitting the multiplier $D^{(n-1)/2}$, it is possible to obtain a discrete frame, consisting of (un-weighted) Gabor ridge functions, for certain subspaces of $L^2(\R^d)$, see \cite[Theorem 5.6]{czaja2016discrete} for a precise statement. 
	
	\item
	While both of the previous representations relied on the ``ridge" function $x\mapsto x\cdot u$, Cand\`{e}s and Donoho used an entirely different approach to construct a family of wavelets called \emph{curvelets} \cite{candes2004new}. They are constructed using a decomposition of the Fourier domain in the same spirit as the \emph{second dyadic decomposition} \cite[pages 377 and  403]{stein1993harmonic}. Each curvelet oscillates in a certain direction and its shape satisfies the anisotropic scaling relation, width $\sim$ length$^2$. We omit their precise definitions since they are quite technical to state and such details are not relevant to our current discussion. 
	
	\item
	Finally, shearlets \cite{labate2005sparse, guo2007optimally,czaja2012isotropic, czaja2014anisotropic, bosch2015anisotropic} are also wavelets that extract directional information from functions. Unlike curvelets, they are generated using shearing operations as opposed to rotations. 
\end{enumerate}

With these examples of directional representations in mind, we return our attention to rotational uniform covering frames. We first show that each frame element oscillates in a certain direction, which is not surprising since they are partially generated by rotations. We already observed that $f_p\in\S(\R^d)$, so by Fourier inversion and a change of variables, for all $x\in\R^d$,
\begin{equation}
	\label{eq9}
	f_{m,r}(x)
	=\int_0^\infty \int_{\s^{d-1}} e^{2\pi i\rho(x\cdot r\omega)}\eta_A(\rho-Am)\beta_{m,B}(\omega)\rho^{d-1}\ d\sigma(\omega) d\rho,
\end{equation}
where $\sigma$ is the surface measure of $\s^{d-1}$. Recall that $\beta_{m,B}$ is supported in the cap $\{\phi_1\colon |\phi_1|\leq B_m\}$, which implies the integral in (\ref{eq9}) is taken over a subset of the sphere where $\omega\sim e_1$. We consider two separate cases. 

\begin{enumerate}[(a)]
	\item 
	If $x\in\R^d$ is parallel to $re_1$, then $x\cdot r\omega\sim 1$ and the phase in the integrand of (\ref{eq9}) rapidly changes. Since $\eta_A$ and $\beta_{m,B}$ are non-negative, we expect $f_{m,r}$ to oscillate in the direction of $re_1$. Further, due to the compact support of $\eta_A$, we have $\rho\sim mA$, so $f_{m,r}$ oscillates at frequency approximately $mA$.
	\item
	If $x\in\R^d$ is perpendicular to $re_1$, then $x\cdot r\omega\sim 0$, and the phase in the integrand of (\ref{eq9}) changes slowly. Since $\eta_A$ and $\beta_{m,B}$ are essentially constant, we do not expect $f_{m,r}$ to oscillate in directions perpendicular to $re_1$. 
\end{enumerate}

Figure \ref{fig 1} contains several numerically computed examples of $f_{m,r}$ and they agree with the qualitative description that we presented. 

\begin{figure}[!]
	\begin{center}
	\includegraphics[scale=0.5]{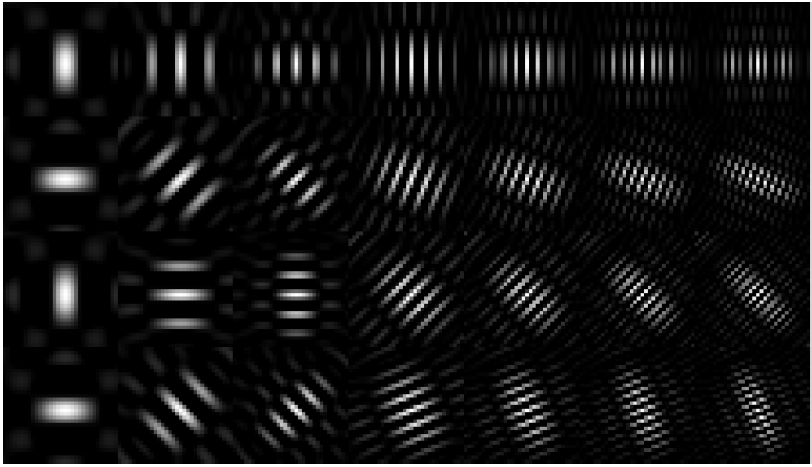}	
	\end{center}
	\caption{Let $d=2$ and $G$ be the group generated by rotations by $2\pi/4$. The figures are zoomed-in intensity plots of the rotational uniform covering frame elements. The $(n,m)$-th image corresponds to $f_{m,r_n}$ where $r_n=2\pi (n-1)/(4m^*)$.}
	\label{fig 1}
\end{figure}

In view of the directional bias of the frame elements, one might suspect that the frame coefficient $f*f_{m,r}$ carries directional frequency information about $f$. Indeed, by Plancherel,
\[
\|f*f_{m,r}\|_{L^2}^2
=\int_0^\infty \int_{\s^{d-1}} |\hat f(\rho r\omega)|^2 |\eta_A(\rho-mA)|^2 |\beta_{m,B}(\omega)|^2\rho^{d-1} \ d\sigma(\omega) d\rho. 
\]
By the definitions of $\eta_A$ and $\beta_{m,B}$, the integral is taken over the region where $\rho\sim mA$ and $\omega\sim e_1$. This equation shows that $\|f*f_{m,r}\|_{L^2}$ is a weighted average of how much $f$ oscillates at frequencies roughly $mA$ and in approximately the $re_1$ direction.  

We have shown that rotational uniform covering frame elements are directionally sensitive and their frame coefficients carry directional information. These properties are also shared by ridgelets, weighted Gabor ridge functions, and curvelets. However, since ridgelets and weighted Gabor ridge functions are constant on hyperplanes, they are not in $L^p(\R^d)$ for any $0<p<\infty$. In contrast, curvelets and rotational uniform covering frame elements are smooth and well-localized in both space and frequency, and consequently, they belong to many popular function spaces. Of course, curvelets are not uniform covering frames, but their constructions have significant similarities. This can be immediately seen by comparing Figure \ref{fig 1} with \cite[Figure 2.2]{candes2004new}. In view of these observations, it is reasonable to say that Gabor ridge functions are time-frequency analogues of ridgelets, while rotational uniform covering frame elements are time-frequency analogues of curvelets.

\section{Acknowledgements}

This work was supported in part by the Defense Threat Reduction Agency Grant HDTRA 1-13-1-0015 and by the Army Research Office Grants W911 NF-15-1-0112 and W911 NF-16-1-0008. Weilin Li was supported by the National Science Foundation Grant DMS-1440140 while in residence at the Mathematical Sciences Research Institute in Berkeley, California, during the Spring 2017 semester. 


\bibliography{rotUCFbib}
\bibliographystyle{plain}

\end{document}